\newtheorem{thm}{Theorem}
\newtheorem{lem}[thm]{Lemma}
\newtheorem{qu}[thm]{Question}
\begin{document}

\begin{center}

\noindent  \large Fundamental groups of aspherical manifolds
 
  that collapse.\normalsize

\begin{normalsize}
Sergio Zamora

sxz38@psu.edu
\end{normalsize}
\end{center}

\begin{abstract}
We show that if a sequence $M_n$ of closed aspherical $d$-dimensional Riemannian manifolds with Ricci curvature uniformly bounded below and diameter uniformly bounded above collapses, then for all large enough $n$, the fundamental groups $\pi_1(M_n)$ have non-trivial finitely generated abelian normal subgroups. In particular, the groups $\pi_1(M_n)$ cannot be non-elementary hyperbolic.
\end{abstract}

\section{Introduction}

In the 80's Gromov \cite{PG} showed that for fixed $c \in \mathbb{R}$, $d \in \mathbb{N}$, and $D > 0$, the family $ \mathfrak{M}(d,c,D)$ of closed Riemannian manifolds $M$  of dimension  $ dim(M) \leq d$,  Ricci curvature $Ric(M) \geq c$,  and diameter $diam(M ) :  = \sup_{x,y \in M }d(x,y) \leq D $ is pre-compact in the Gromov--Hausdorff topology. 

A lot can be said about both the elements of $\mathfrak{M}(d,c,D)$ and their Gromov--Hausdorff limit points (see \cite{CC}, \cite{Co}, \cite{PG}, \cite{LV}, \cite{Wei}). A sequence $M_n \in \mathfrak{M}(d,c,D)$ behaves quite differently depending on whether $vol(M_n) \geq \nu > 0$ for all $n$, or $vol(M_n)\to 0$ as $n \to \infty$ (up to subsequence, one of these two events occur) (see \cite{CJN}, \cite{GPW}, \cite{KL}, \cite{Wa}). In the latter case we say that the sequence \textit{collapses}. The goal of this note is to compile a proof of the following results:

\begin{thm}\label{Abelian}
Let $M_n \in \mathfrak{M}(d,c,D)$ be a collapsing sequence of aspherical manifolds. Then for large enough $n$, there are non-trivial finitely generated abelian normal subgroups $1 \neq H_n \triangleleft \pi_1 (M_n)$.
\end{thm}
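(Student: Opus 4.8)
The plan is to reduce Theorem~\ref{Abelian} to a group-theoretic statement about $\Gamma_n:=\pi_1(M_n)$ and then to extract that statement from the generalized Margulis lemma together with the structure theory of collapse under a lower Ricci bound; below, $\tilde M_n$ is the universal cover of $M_n$. First, since $M_n$ is closed and aspherical, $\Gamma_n$ is finitely presented; it is torsion-free, because $\tilde M_n$ is contractible of dimension $d$, so $\Gamma_n$ has cohomological dimension $\le d<\infty$, while any group with torsion has infinite cohomological dimension; and it is non-trivial, since a closed manifold of positive dimension is not contractible. Hence $\Gamma_n$ is infinite. I claim it is enough to produce, for all large $n$, a non-trivial finitely generated nilpotent normal subgroup $N_n\triangleleft\Gamma_n$: granting this, $H_n:=Z(N_n)$ is non-trivial (a non-trivial nilpotent group has non-trivial center), abelian, finitely generated (subgroups of finitely generated nilpotent groups are finitely generated), and characteristic in $N_n$ hence normal in $\Gamma_n$ — precisely the subgroup the theorem asks for. (This also gives the last sentence of the abstract: a torsion-free non-elementary hyperbolic group has no non-trivial abelian normal subgroup, since an infinite normal subgroup of a non-elementary hyperbolic group is itself non-elementary, in particular non-amenable.)

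Next, rescaling the metrics — which alters only the constants $c$ and $D$ and preserves the collapse — we may assume $Ric(M_n)\ge-(d-1)$. Being aspherical, each $M_n$ is essential, so Gromov's systolic inequality gives $sys(M_n)^d\le C(d)\,vol(M_n)$, and $vol(M_n)\to0$ forces $sys(M_n)\to0$. Thus for every $\varepsilon>0$ and every sufficiently large $n$, $M_n$ carries a non-contractible loop of length $<\varepsilon$; this is the only place asphericity is really used.

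Now pass to a subsequence and take the equivariant Gromov--Hausdorff limit $(\tilde M_n,\tilde p_n,\Gamma_n)\to(X,p_\infty,G)$, with $X$ a Ricci-limit space, $G\le\mathrm{Isom}(X)$ a closed subgroup, $X/G=\lim M_n$, and $\Gamma_n$ converging to $G$ equivariantly. By the generalized Margulis lemma of Kapovitch--Wilking and the structure theory of collapsing sequences with $Ric$ bounded below and $diam$ bounded above, $G$ is a Lie group, the collapse makes $G$ large (either $\dim G\ge1$ or the elements of $\Gamma_n$ limiting to the identity of $X$ form a large group), and this is packaged, for large $n$, into a finitely generated nilpotent \emph{normal} subgroup $N_n\triangleleft\Gamma_n$ of rank $\le d$ — the ``collapsed directions'' of $M_n$. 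Concretely, one expects $N_n:=\big\langle\gamma\in\Gamma_n:\ell(\gamma)\le\delta_n\big\rangle$ to work, where $\ell(\gamma):=\inf_{x\in\tilde M_n}d(x,\gamma x)$ is the translation length and $\delta_n\downarrow0$ is suitably chosen with $\delta_n\ge sys(M_n)$: conjugation-invariance of $\ell$ makes the generating set conjugation-invariant, so $N_n\triangleleft\Gamma_n$; $sys(M_n)\le\delta_n$ puts the class of a shortest non-contractible loop into $N_n$, so $N_n\ne1$; and the cited theory gives that $N_n$ is finitely generated and nilpotent for $n$ large. Combined with the reduction above, this proves the theorem.

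The main obstacle is the last step. The Margulis lemma by itself yields only, at a fixed base point $p$, a \emph{virtually} nilpotent \emph{local} fundamental group — not normal in $\Gamma_n$, not nilpotent on the nose, and not visibly finitely generated; in fact, if the diameter bound is dropped, the small-translation-length elements in the fundamental group of a closed hyperbolic manifold generate a non-abelian free group. Upgrading to an honest finitely generated nilpotent \emph{normal} subgroup is exactly where the collapse hypothesis and the full collapsing theory under a lower Ricci bound (the Ricci counterpart of the Cheeger--Fukaya--Gromov nilpotent Killing structure) have to be invoked, and the normality of $N_n$ is a shadow of the canonicity of that structure, that is, of the conjugation-invariance of $\ell$.
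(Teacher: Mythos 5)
Your overall architecture matches the paper's: get a short noncontractible loop from collapse plus asphericity (Gromov's systolic inequality / Theorem~\ref{Sys}), produce a non-trivial finitely generated nilpotent normal subgroup $N_n\triangleleft\pi_1(M_n)$, and pass to its center, which is characteristic in $N_n$ and hence normal in $\pi_1(M_n)$. The reduction step and the systole step are fine. But the crucial middle step --- producing $N_n$ --- is exactly where your write-up has a genuine gap, and you say so yourself (``one expects \dots to work'', ``the cited theory gives''). Your candidate $N_n=\langle\gamma:\ell(\gamma)\le\delta_n\rangle$ is indeed normal and non-trivial, but nothing you invoke shows it is nilpotent or even finitely generated: the Margulis lemma only makes the subgroup generated by short elements \emph{at a fixed point} \emph{virtually} nilpotent, and the conjugation-invariant group generated by short-translation-length elements over all of $\tilde M_n$ could a priori be much larger. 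No amount of massaging that candidate is known to close this.

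What actually closes the gap is the precise form of Kapovitch--Wilking's Theorem~6 (Theorem~\ref{KW} in the paper): there are uniform constants $\varepsilon_0,C>0$ such that each $M\in\mathfrak{M}(d,c,D)$ admits a scale $\varepsilon\ge\varepsilon_0$ and a normal nilpotent subgroup $N\triangleleft\pi_1(M)$ of rank and step $\le d$ that is contained, with index $\le C$, in the image of $\pi_1(B(p,\varepsilon),p)\to\pi_1(M,p)$ for \emph{every} basepoint $p$. Normality, nilpotency and finite generation are then handed to you; the only thing left to check is $N_n\ne 1$. Here the paper uses a point you treat as a throwaway: since $r_n\le\varepsilon_0\le\varepsilon_n$ eventually, the short noncontractible loop makes the local image at scale $\varepsilon_n$ non-trivial, and torsion-freeness of $\pi_1(M_n)$ (Lemma~\ref{TorFree}, again from asphericity) upgrades ``non-trivial'' to ``infinite'', so its finite-index subgroup $N_n$ cannot be trivial. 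Without torsion-freeness a non-trivial finite image could contain the trivial group with finite index, so your remark that the systolic inequality is ``the only place asphericity is really used'' is not accurate for the argument as it must actually be run.
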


\begin{thm}\label{HyperCollapse}
Let $M_n \in \mathfrak{M}(d,c,D)$ be a sequence of aspherical manifolds with $\pi_1(M_n) $ non-elementary hyperbolic for each $n$. Then such sequence is non-collapsing.
\end{thm}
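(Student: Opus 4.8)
The plan is to deduce Theorem~\ref{HyperCollapse} directly from Theorem~\ref{Abelian}, together with two standard facts: that the fundamental group of a closed aspherical manifold is torsion-free, and that an infinite normal subgroup of a non-elementary hyperbolic group is again non-elementary. I argue by contradiction. Suppose toward a contradiction that the sequence $M_n$ --- or some subsequence of it --- collapses; relabel so that $M_n$ itself is a collapsing sequence of closed aspherical manifolds with every $\pi_1(M_n)$ non-elementary hyperbolic. Applying Theorem~\ref{Abelian}, I may fix one such $n$ large enough that $\pi_1(M_n)$ contains a non-trivial finitely generated abelian normal subgroup $H_n \triangleleft \pi_1(M_n)$; write $G := \pi_1(M_n)$.

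Next I would upgrade ``non-trivial'' to ``infinite''. Since $M_n$ is closed and aspherical, its universal cover is a contractible $d$-dimensional manifold on which $G$ acts freely and properly discontinuously by deck transformations; hence $G$ has finite cohomological dimension, and in particular $G$ is torsion-free, since a non-trivial finite subgroup cannot act freely on a finite-dimensional contractible complex (its cohomology is periodic and non-vanishing in arbitrarily high degrees). Consequently every non-trivial element of $G$ has infinite order, so $H_n$ is infinite. This is the one place where asphericity is genuinely used in the reduction: a merely finite normal subgroup would be entirely compatible with $G$ being non-elementary hyperbolic, so torsion-freeness is exactly what is needed to make Theorem~\ref{Abelian} bite.

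Then comes the group-theoretic core. Consider the Gromov boundary $\partial G$ and the limit set $\Lambda(H_n) \subseteq \partial G$. Since $H_n$ is infinite, $\Lambda(H_n) \neq \emptyset$; since $H_n \triangleleft G$ and limit sets transform equivariantly ($g\,\Lambda(H_n) = \Lambda(gH_ng^{-1}) = \Lambda(H_n)$), the closed set $\Lambda(H_n)$ is $G$-invariant; and since $G$ is non-elementary, its action on $\partial G$ is minimal, so $\Lambda(H_n) = \partial G$, which is infinite. Hence $H_n$ is a non-elementary hyperbolic group, so it contains a free subgroup of rank two and in particular is not virtually abelian --- contradicting that $H_n$ is abelian. (If one prefers to avoid boundary dynamics: fixing $1 \neq h \in H_n$, the centralizer $C_G(h)$ is virtually cyclic, hence infinite cyclic as $G$ is torsion-free; since $H_n \leq C_G(h)$ is abelian this forces $H_n \cong \mathbb{Z}$, and a torsion-free hyperbolic group with an infinite cyclic normal subgroup is virtually cyclic, making $G$ elementary --- again a contradiction.) This contradiction shows that no subsequence of $M_n$ collapses, i.e., the sequence is non-collapsing.

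I do not expect any real obstacle in this argument: the entire difficulty is concentrated in Theorem~\ref{Abelian}, which we may assume, and the passage to Theorem~\ref{HyperCollapse} is a short formal argument. The only subtlety worth stating carefully is the torsion-freeness step above, which is what converts the abelian normal subgroup handed to us by Theorem~\ref{Abelian} into an \emph{infinite} one, and thereby into an obstruction to non-elementary hyperbolicity.
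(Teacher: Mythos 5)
Your proposal is correct, and its main line of argument takes a genuinely different route from the paper. The paper's proof is purely algebraic: after invoking Theorem~\ref{Abelian} and Lemma~\ref{TorFree} to get an infinite finitely generated torsion-free abelian normal subgroup $H_n \cong \mathbb{Z}^k$, it applies Theorem~\ref{Z2} to force $k=1$, and then Theorem~\ref{Normalizer} (applied to a generator $h$ of $H_n$, whose cyclic subgroup is normal, so the subgroup $E$ there is all of $\pi_1(M_n)$) to conclude $[\pi_1(M_n):H_n]<\infty$, contradicting non-elementarity. Your primary argument instead runs through boundary dynamics: the limit set $\Lambda(H_n)$ is non-empty, closed, and $G$-invariant by normality, hence equals $\partial G$ by minimality, forcing $H_n$ to contain a free group of rank two. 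This is a standard and correct alternative; its one advantage is that it never needs Theorem~\ref{Z2}, and it generalizes to the statement that an infinite normal subgroup of a non-elementary hyperbolic group is itself non-elementary. One small imprecision: you should conclude that $H_n$ is a \emph{non-elementary subgroup} of $G$ rather than a ``non-elementary hyperbolic group,'' since subgroups of hyperbolic groups need not be hyperbolic --- though the dichotomy ``elementary or contains $F_2$'' for subgroups is exactly what you need, and the contradiction with $H_n$ abelian goes through. Your parenthetical alternative (centralizer of $h$ is virtually cyclic, torsion-freeness forces $H_n\cong\mathbb{Z}$, and a normal infinite cyclic subgroup has finite index) is essentially the paper's own proof, with Theorem~\ref{Normalizer} supplying the last step. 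Your cohomological-dimension justification of torsion-freeness is precisely the content of the paper's Lemma~\ref{TorFree}.
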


It is a classical result that aspherical closed manifolds with hyperbolic fundamental group have positive simplicial volume (see \cite{GH}, \cite{LohS}), so Theorem \ref{HyperCollapse} is known to hold when all $M_n's$ are homeomorphic to each other.

The structure of this note is as follows: Section \ref{Prelim} covers the ingredients for Theorems \ref{Abelian} and \ref{HyperCollapse}, Section \ref{P} contains the proof of such Theorems, and Section \ref{Problems} discusses related questions and open problems.

\section{Preliminaries}\label{Prelim}

\subsection{Aspherical Manifolds}

Let $M$ be a closed smooth manifold. We say that $M$ is \textit{aspherical} if $\pi_k(M) = 0$ for $k \geq 2$. 

\begin{thm}\label{Sys}
(\cite{FRM}, Section 1). Let $M_n$ be a sequence of closed aspherical $d$-dimensional Riemannian manifolds such that $vol(M_n) \to 0$ as $n \to \infty$. Then there is a sequence of noncontractible loops $\gamma_n : \mathbb{S}^1 \to M_n$ such that $length (\gamma_n) \to 0$ as $n \to \infty$.
\end{thm}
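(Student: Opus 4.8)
The plan is to deduce this from Gromov's systolic inequality for essential manifolds, together with the elementary observation that aspherical manifolds are essential. Recall that a closed $d$-manifold $M$ is \emph{essential} (say, over $\mathbb{Z}/2$, to avoid orientability issues) if the map $c_M \colon M \to K(\pi_1 M, 1)$ classifying the universal cover sends the fundamental class $[M] \in H_d(M; \mathbb{Z}/2)$ to a nonzero class. When $M$ is aspherical, $M$ is itself a model for $K(\pi_1 M, 1)$ and $c_M$ may be taken to be a homotopy equivalence, so it is manifestly nonzero on top homology; hence every closed aspherical manifold is essential. This is the only place asphericity is used.

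Next I would invoke Gromov's theorem from \emph{Filling Riemannian manifolds}: there is a constant $C_d > 0$ such that every closed essential Riemannian $d$-manifold $M$ satisfies $\mathrm{sys}_1(M) \le C_d\, \mathrm{vol}(M)^{1/d}$, where $\mathrm{sys}_1(M)$ is the infimum of lengths of noncontractible loops in $M$. Applying this to $M_n$ and using the hypothesis $\mathrm{vol}(M_n) \to 0$ gives $\mathrm{sys}_1(M_n) \le C_d\, \mathrm{vol}(M_n)^{1/d} \to 0$.

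To finish, for each $n$, since $M_n$ is a closed Riemannian manifold the infimum defining $\mathrm{sys}_1(M_n)$ is attained by a shortest noncontractible closed geodesic $\gamma_n$ (or, if one prefers to sidestep this, simply take a noncontractible loop $\gamma_n$ with $\mathrm{length}(\gamma_n) \le \mathrm{sys}_1(M_n) + 1/n$). Then $\gamma_n \colon \mathbb{S}^1 \to M_n$ is noncontractible and $\mathrm{length}(\gamma_n) \to 0$, as required.

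The one genuinely nontrivial ingredient is Gromov's systolic inequality itself, and that is where I expect the real work to lie in any self-contained treatment: its proof passes through the filling radius, establishing $\mathrm{sys}_1(M) \le 6\,\mathrm{FillRad}(M)$ for essential $M$ and then bounding $\mathrm{FillRad}(M) \le C_d'\, \mathrm{vol}(M)^{1/d}$ via Gromov's filling-volume estimates. I would cite this rather than reprove it. It is worth noting that no curvature hypothesis enters Theorem~\ref{Sys} — only the topological input ``aspherical, hence essential'' and the metric input $\mathrm{vol}(M_n) \to 0$ — so a soft packing or net argument cannot substitute for the systolic inequality here, since controlling the local topology of small metric balls would require a lower curvature bound that is not assumed.
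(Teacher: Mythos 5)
Your proposal is correct and is exactly the argument the paper intends by its citation of (\cite{FRM}, Section~1): closed aspherical manifolds are essential, Gromov's systolic inequality $\mathrm{sys}_1(M) \le C_d\,\mathrm{vol}(M)^{1/d}$ for essential manifolds forces $\mathrm{sys}_1(M_n) \to 0$, and one extracts short noncontractible loops. The paper gives no independent proof, so there is nothing further to compare.
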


\begin{lem}\label{TorFree}
(\cite{H}, Proposition 2.45). Let $M$ be a closed aspherical $d$-dimensional manifold, then $\pi_1(M)$ is torsion free.
\end{lem}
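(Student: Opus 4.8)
The plan is to argue by contradiction, the engine being the classical fact that a nontrivial finite group admits no finite-dimensional $K(G,1)$. Suppose $\pi_1(M)$ contains a nontrivial torsion element; then it contains a subgroup $C\cong\Z/p$ for some prime $p$. Let $\widetilde{M}\to M$ be the universal cover; since $M$ is aspherical, $\widetilde{M}$ is contractible. As $C$ is a subgroup of the deck group $\pi_1(M)$, it acts freely and properly discontinuously on $\widetilde{M}$, so the quotient $\widehat{M}:=\widetilde{M}/C$ is again a $d$-manifold and $\widetilde{M}\to\widehat{M}$ is a covering with deck group $C$. Hence $\pi_1(\widehat{M})\cong\Z/p$ while $\pi_k(\widehat{M})\cong\pi_k(\widetilde{M})=0$ for $k\ge 2$, i.e.\ $\widehat{M}$ is a $K(\Z/p,1)$.

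Next I would invoke two standard inputs. First, every $d$-manifold has the homotopy type of a CW complex of dimension at most $d$, so $H_k(\widehat{M};\Z)=0$ for all $k>d$. Second, the group homology of a cyclic group does not die off: $H_k(\Z/p;\Z)\cong\Z/p$ for every odd $k\ge 1$ (equivalently, the Tate cohomology $\widehat{H}^{2k}(\Z/p;\Z)\cong\Z/p$ for all $k$, by periodicity of the cohomology of a cyclic group). Since $H_k(\widehat{M};\Z)\cong H_k(\Z/p;\Z)$ for a $K(\Z/p,1)$, these two statements contradict each other, so $\pi_1(M)$ must be torsion free.

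An equivalent and perhaps more economical packaging is via cohomological dimension: $M$ itself is a $d$-dimensional model for $B\pi_1(M)$, so $\operatorname{cd}(\pi_1(M))\le d<\infty$; cohomological dimension is monotone under passage to subgroups, so every subgroup of $\pi_1(M)$ has finite cohomological dimension, and by Serre's theorem a group of finite cohomological dimension is torsion free. In either formulation the only non-formal ingredient — and hence the main obstacle — is the classical fact that a nontrivial finite group has infinite cohomological dimension, which rests on the periodicity of the cohomology of cyclic subgroups (or, in the geometric picture above, on Smith theory applied to the free $\Z/p$-action on the contractible manifold $\widetilde{M}$). The remaining steps (existence of the intermediate cover, its asphericity, and the CW approximation of a manifold) are routine.
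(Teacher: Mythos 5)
Your argument is correct and coincides with the proof of the cited result (Hatcher, Proposition~2.45), which is the only proof the paper offers: pass to the cover of $M$ corresponding to a $\Z/p$ subgroup, note it is a finite-dimensional $K(\Z/p,1)$, and contradict the nonvanishing of $H_k(\Z/p;\Z)$ in infinitely many degrees. Your alternative packaging via $\operatorname{cd}(\pi_1(M))\le d$ and Serre's theorem is also fine, but it is the same mechanism in different clothing.
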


\subsection{Ricci Curvature Bounds}

The most important ingredient for Theorem \ref{Abelian} is the following result by Vitali Kapovitch and Burkhard Wilking.

\begin{thm}\label{KW}
(\cite{KW}, Theorem 6) For $d \in \mathbb{N},$ $c \in \mathbb{R}$, and $D > 0$,  there are positive constants $\varepsilon_0$ and $ C $ such that for each $M \in \mathfrak{M}(d,c,D)$ there are $\varepsilon \geq  \varepsilon _0$ and a normal nilpotent subgroup $N \triangleleft \pi_1 (M)$ of rank and step $\leq d$  satisfying that for each $p \in \mathbb{M}$, the image of the map 
$$ \pi_1 ( B (p, \varepsilon),p) \to \pi_1 (M,p) $$
given by the inclusion $B(p, \varepsilon ) \to M$ contains $N$ as a subgroup of index $\leq C$.
\end{thm}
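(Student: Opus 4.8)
The plan is to prove this by a compactness-and-contradiction argument at the level of equivariant pointed Gromov--Hausdorff limits, following the strategy of Kapovitch and Wilking (\cite{KW}). The heart of the matter is a \emph{local} Margulis-type lemma: there exist $\varepsilon_0 = \varepsilon_0(d,c,D) > 0$ and $C = C(d,c,D)$ so that for every $M \in \mathfrak{M}(d,c,D)$ and every $p \in M$, the image of $\pi_1(B(p,\varepsilon),p) \to \pi_1(M,p)$ contains, for some scale $\varepsilon \geq \varepsilon_0$, a nilpotent subgroup of rank and step $\leq d$ and index $\leq C$. I would first establish this local statement, and then upgrade it to the asserted \emph{normal} subgroup $N \triangleleft \pi_1(M)$ that works simultaneously at all basepoints.

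For the local statement I would argue by contradiction: assuming no such $\varepsilon_0, C$ exist, one obtains a sequence $M_i \in \mathfrak{M}(d,c,D)$, points $p_i$, and scales $\varepsilon_i \to 0$ for which the images $\Gamma_i := \mathrm{im}\bigl(\pi_1(B(p_i,\varepsilon),p_i) \to \pi_1(M_i,p_i)\bigr)$ fail to contain nilpotent subgroups of controlled rank, step and index at \emph{every} scale $\varepsilon \in [\varepsilon_i, \varepsilon_0]$. Passing to universal covers with their deck actions, Gromov's precompactness (\cite{PG}) together with the standard equivariant compactness theorem yields, after a subsequence and suitable rescaling, a pointed limit space $(Y,y)$ and a closed limit group $G \leq \mathrm{Isom}(Y)$ carrying the limits of the subgroups generated by the relevant short loops. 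Because two scales $\varepsilon_i$ and $\varepsilon_0$ are in play, one must run an \emph{induction on scales}: one zooms in until the local group stabilizes, which is precisely why the conclusion is phrased with a choice of $\varepsilon \geq \varepsilon_0$ rather than a fixed $\varepsilon$.

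The structural analysis of $G$ rests on two deep inputs from the theory of Ricci limit spaces. First, by Cheeger--Colding (\cite{CC}) the limit $Y$ is a metric measure space for which the splitting theorem holds, so any line splits off an isometric $\mathbb{R}$-factor, and $\mathrm{Isom}(Y)$ is a Lie group with no small subgroups. Second, the gradient and segment estimates underlying the regularity of geodesics in such limits provide the control needed to track how orbits and displacement functions behave under zooming. Using these I would show that $G$ is virtually nilpotent with data bounded purely in terms of $d$: one detects translations via splittings, quotients by the resulting normal abelian factor, and induces on the dimension of $Y$, which drops under each splitting and is at most $d$; the bounds rank, step $\leq d$ emerge from the dimension of the limit Lie group sitting inside $\mathrm{Isom}(Y)$. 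The failure of the $\Gamma_i$ to be nilpotent at all scales then contradicts the nilpotency of the limit group, producing the uniform $\varepsilon_0$ and $C$.

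Finally, to pass from the local nilpotent subgroups to a single \emph{normal} $N \triangleleft \pi_1(M)$ contained in every local image, I would exploit the diameter bound: cover $M$ by finitely many $\varepsilon$-balls, observe that conjugation by loops carries one local image into another, and replace the nilpotent subgroup produced above by its normal core, which has index $\leq C!$ (still bounded) and remains nilpotent of the same rank and step, since subgroups of nilpotent groups are nilpotent. The main obstacle, and what makes this a genuinely hard theorem rather than a formal consequence of Gromov's classical almost-flat manifold lemma, is precisely that Ricci limits need \emph{not} be manifolds and can be severely singular: unlike the sectional-curvature setting there is no fibration theorem or almost-flat fiber structure to lean on, so the entire argument must be run on singular limit spaces, and keeping $\varepsilon_0$ and $C$ uniform while simultaneously inducting on scales and on dimension is the delicate technical core. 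The Lie-group structure of $\mathrm{Isom}(Y)$ and the gradient estimates for Ricci limits are exactly the tools that tame this difficulty.
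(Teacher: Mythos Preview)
The paper does not prove this theorem at all: it is quoted verbatim as a result of Kapovitch and Wilking (\cite{KW}, Theorem 6) and used as a black box in the proof of Theorem~\ref{Abelian}. There is therefore nothing in the paper to compare your proposal against.

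For what it is worth, your sketch is a reasonable high-level outline of the Kapovitch--Wilking strategy (contradiction, rescaling, equivariant Gromov--Hausdorff limits, induction on dimension via splitting, Lie structure of the isometry group of the limit). But be aware that it is only a sketch: the actual proof in \cite{KW} is substantially more intricate, involving a careful ``zooming in'' argument with control on the number of rescalings, a replacement theorem for short generators, and a delicate gap argument to pass from the limit group back to the approximating discrete groups. Your passage from the local statement to a normal subgroup via the normal core is also not quite what Kapovitch--Wilking do; they obtain normality more directly from the structure of the limit action. If your intent is to reproduce their proof rather than merely cite it, considerably more detail would be required.
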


\subsection{Hyperbolic Groups}\label{HyperbolicAlgebra}



A hyperbolic group $\Gamma$ is called \textit{elementary} if it contains a cyclic subgroup of finite index.

\begin{thm}\label{Z2}
(\cite{Loh}, Corollary 7.5.19). Let $\Gamma$ be a non-elementary hyperbolic group. Then no subgroup of $\Gamma$ is isomorphic to $\mathbb{Z}^2$.
\end{thm}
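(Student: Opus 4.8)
The statement in fact holds for \emph{every} hyperbolic group: an elementary hyperbolic group is itself virtually cyclic and so visibly contains no copy of $\mathbb{Z}^2$. So I would drop the non-elementary hypothesis and instead prove the more structural fact that implies it at once: \emph{in a hyperbolic group $\Gamma$, the centralizer $C_\Gamma(g)$ of any infinite-order element $g$ is virtually cyclic.} Granting this, suppose toward a contradiction that $H=\langle a,b\rangle\cong\mathbb{Z}^2$ is a subgroup of $\Gamma$. Then $a$ has infinite order, and since $H$ is abelian we have $H\le C_\Gamma(a)$; but $\mathbb{Z}^2$ is not virtually cyclic (it has no infinite cyclic subgroup of finite index, since every finite-index subgroup of $\mathbb{Z}^2$ is again free abelian of rank $2$, and it has no finite subgroup of finite index since it is infinite), contradicting the claim applied to $g=a$. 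So it remains to prove the claim.

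For the claim I would work with the action of $\Gamma$ on its Gromov boundary $\partial\Gamma$. The first input is the standard dynamical description of infinite-order elements: the bi-infinite sequence of powers of $g$ is a quasi-geodesic (equivalently, $g$ has positive stable translation length; this is where hyperbolicity is genuinely used, via stability of quasi-geodesics together with a subadditivity argument on word lengths), so $g$ has exactly two fixed points $\xi^{+}=\lim_{n}g^{n}$ and $\xi^{-}=\lim_{n}g^{-n}$ on $\partial\Gamma$, and acts with north--south dynamics. If $h$ commutes with $g$, then $h$ carries the fixed-point set $\{\xi^{+},\xi^{-}\}$ of $g$ onto the fixed-point set of $hgh^{-1}=g$, i.e.\ onto itself; hence $C_\Gamma(g)$ is contained in the setwise stabilizer $E(g)$ of $\{\xi^{+},\xi^{-}\}$, and it suffices to show $E(g)$ is virtually cyclic.

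Finally, pass to the index-$\le 2$ subgroup $E^{+}(g)\le E(g)$ fixing $\xi^{+}$ and $\xi^{-}$ individually, and fix a quasi-axis $L$ of $g$ through the identity, a $g$-invariant quasi-geodesic line from $\xi^{-}$ to $\xi^{+}$, which by the Morse lemma is unique up to bounded Hausdorff distance. Every element of $E^{+}(g)$ preserves $L$ up to bounded Hausdorff distance, and projecting its action to $L\cong\mathbb{R}$ gives a translation-number homomorphism $\tau\colon E^{+}(g)\to\mathbb{R}$. The kernel of $\tau$ consists of elements moving a fixed basepoint a bounded amount, hence is finite by properness of the $\Gamma$-action on itself; and the image of $\tau$ is a nontrivial subgroup of $\mathbb{R}$ bounded away from $0$ (here one uses that stable translation lengths in a hyperbolic group are bounded below by a positive constant, or are rational with bounded denominator), hence infinite cyclic. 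Therefore $E^{+}(g)$, and so $E(g)$ and $C_\Gamma(g)$, is virtually $\mathbb{Z}$, which proves the claim. The main obstacle is assembling the geometric package behind this argument -- the Morse lemma for quasi-geodesics in hyperbolic spaces, north--south dynamics on $\partial\Gamma$, and the positivity and discreteness of stable translation lengths; once these are available the group-theoretic deduction is short. One can also avoid the boundary altogether by running the same translation-number argument directly on the quasi-axis $L$.
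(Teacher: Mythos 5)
The paper offers no proof of this statement---it is quoted directly from \cite{Loh}---so there is no internal argument to compare yours against; what follows is a comparison with the cited sources and with what the paper already has on hand. Your reduction is the standard one (an infinite-order element's centralizer, indeed the stabilizer $E(g)$ of its fixed-point pair on $\partial\Gamma$, is virtually cyclic, while $\mathbb{Z}^2$ is not), and you are right that the non-elementary hypothesis is superfluous. The one step that needs more care than you give it is the claim that ``the image of $\tau$ is a nontrivial subgroup of $\mathbb{R}$'': the translation number attached to a quasi-action on a quasi-line is a priori only a homogeneous \emph{quasi}morphism, not a homomorphism, so its image need not be a subgroup and the ``discrete subgroup of $\mathbb{R}$, hence infinite cyclic'' conclusion does not parse as written; the appeal to Delzant's rationality of stable translation lengths is also a much heavier tool than the statement requires. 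The standard way to close this is to drop $\tau$ entirely: every $h\in E^{+}(g)$ carries $L$ to a quasi-geodesic with the same endpoints, so by the Morse lemma $hx_0$ lies in a uniform neighborhood of $L$ and hence within uniformly bounded distance of some $g^{k}x_0$; properness of the action of $\Gamma$ on itself leaves finitely many possibilities for $g^{-k}h$, whence $[E^{+}(g):\langle g\rangle]<\infty$ directly. Finally, note that in the context of this paper the shortest route is via Theorem \ref{Normalizer}, which the author already quotes: if $H\cong\mathbb{Z}^2$ sits in $\Gamma$, pick a generator $a$ of $H$, set $C=\langle a\rangle$, and observe that $hCh^{-1}=C$ for all $h\in H$ since $H$ is abelian; thus $H\le E$ and $[H:H\cap C]\le[E:C]<\infty$, making $\mathbb{Z}^2$ virtually cyclic, a contradiction. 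Your centralizer claim is essentially the content of that quoted theorem, and the geometric package you list (Morse lemma, north--south dynamics, bounded coset counting along a quasi-axis) is precisely what its proofs in \cite{GdlH} and \cite{O} assemble.
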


\begin{thm}\label{Normalizer}
(\cite{GdlH}, Theorem 34), (\cite{O}, Lemma 1.16). Let $h$ be an infinite order element of a hyperbolic group $\Gamma$, and $C : = \langle h \rangle$. Then the set
$$ E :=  \{ g \in \Gamma  \vert (g C g^{-1} ) \cap C \neq \{ e \}   \}  $$
is a subgroup of $\Gamma$ containing $C$, and $ [ E : C ] < \infty $.
\end{thm}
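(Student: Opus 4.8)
The plan is to separate the purely algebraic part of the statement from the single point where hyperbolicity is genuinely used.

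\emph{$E$ is a subgroup containing $C$.} Since $h$ has infinite order, $eCe^{-1}\cap C=C\neq\{e\}$, so $e\in E$, and $gCg^{-1}=C$ whenever $g\in C$, so $C\subseteq E$. If $g\in E$, say $gh^{m}g^{-1}=h^{n}$ with $m,n\neq 0$, then $g^{-1}h^{n}g=h^{m}$, so $E$ is closed under inverses. For products, suppose $g_{1}h^{m_{1}}g_{1}^{-1}=h^{n_{1}}$ and $g_{2}h^{m_{2}}g_{2}^{-1}=h^{n_{2}}$ with all four exponents nonzero; conjugating $h^{m_{1}m_{2}}$ first by $g_{2}$ (which gives $h^{m_{1}n_{2}}$) and then by $g_{1}$ yields $(g_{1}g_{2})h^{m_{1}m_{2}}(g_{1}g_{2})^{-1}=h^{n_{1}n_{2}}$, and $m_{1}m_{2}\neq 0$, so $g_{1}g_{2}\in E$. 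None of this uses that $\Gamma$ is hyperbolic.

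\emph{$E$ lies in the stabilizer of a pair of boundary points.} Fix a finite generating set for $\Gamma$ and work in its Cayley graph, which is $\delta$-hyperbolic and locally finite. An infinite-order element of a hyperbolic group is loxodromic: the orbit map $n\mapsto h^{n}$ is a $(\lambda,c)$-quasi-isometric embedding of $\mathbb{Z}$, so $A:=\{h^{n}:n\in\mathbb{Z}\}$ is a quasi-geodesic line with two endpoints $h^{+},h^{-}\in\partial\Gamma$, and these are the only points of $\partial\Gamma$ fixed by any nonzero power of $h$. If $g\in E$ and $gh^{m}g^{-1}=h^{n}$ with $m,n\neq 0$, then the fixed-point set of the loxodromic element $h^{n}$ is both $\{h^{+},h^{-}\}$ and $g\cdot\{h^{+},h^{-}\}=\{g\cdot h^{+},g\cdot h^{-}\}$ (since $g$ acts on $\partial\Gamma$ as a homeomorphism conjugating $h^{m}$ to $h^{n}$, and $h^{m}$ has the same fixed points as $h$). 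Hence every $g\in E$ stabilizes the pair $\{h^{+},h^{-}\}$.

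\emph{Counting cosets of $C$ in $E$.} Let $E_{0}:=\{g\in E:g\cdot h^{+}=h^{+}\ \text{and}\ g\cdot h^{-}=h^{-}\}$. Then $C\subseteq E_{0}$ and $[E:E_{0}]\leq 2$, so it is enough to bound $[E_{0}:C]$. Given $g\in E_{0}$, the set $gA$ is the image of $A$ under an isometry of the Cayley graph, hence again a $(\lambda,c)$-quasi-geodesic line, and it has the same endpoints $h^{+},h^{-}$ as $A$; by the Morse lemma (stability of quasi-geodesics in $\delta$-hyperbolic spaces), $gA$ lies within Hausdorff distance $R$ of $A$, where $R=R(\delta,\lambda,c)$ does not depend on $g$. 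In particular $d(g,h^{n})\leq R$ for some $n\in\mathbb{Z}$, so $d(h^{-n}g,e)\leq R$ by left-invariance of the word metric, and $h^{-n}g\in E_{0}$ because $h^{-n}\in C$. Thus $h^{-n}g$ lies in the finite set $F:=E_{0}\cap B(e,R)$, so $g\in Cf$ for some $f\in F$. Therefore $E_{0}=\bigcup_{f\in F}Cf$ is a union of at most $|F|$ right cosets of $C$, whence $[E_{0}:C]\leq|F|<\infty$ and $[E:C]\leq 2|F|<\infty$.

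\emph{Main obstacle.} The one input I have taken for granted — and the only place hyperbolicity really matters — is that an infinite-order element of a hyperbolic group has an undistorted (hence quasi-geodesic) cyclic orbit and acts loxodromically with exactly two fixed points on $\partial\Gamma$, together with the fact, equally crucial for the counting, that the Morse constant $R$ depends only on $\delta$ and on the quasi-isometry constants of $A$ itself, so that it is uniform over all $g\in E_{0}$. These are standard facts; modulo them the argument is bookkeeping, and one could equally well invoke the classification of virtually cyclic (``elementary'') subgroups of hyperbolic groups to shorten it.
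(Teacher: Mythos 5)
Your proof is correct; the paper itself gives no argument for this statement (it is quoted from Ghys--de la Harpe and Ol'shanskii), and what you have written is essentially the standard proof found in those sources: the algebraic verification that $E$ is a subgroup, the observation that $E$ stabilizes the endpoint pair $\{h^{+},h^{-}\}$ of the quasi-geodesic $\langle h\rangle$-orbit, and the Morse-lemma coset count with a uniform constant $R(\delta,\lambda,c)$. The ingredients you flag as taken for granted (undistortion of infinite cyclic subgroups, the two-point fixed set of a loxodromic element, stability of quasi-geodesics with common endpoints at infinity) are exactly the standard inputs those references also rely on, so there is no gap.
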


\section{Proofs of Theorems \ref{Abelian} and \ref{HyperCollapse}}\label{P}

\begin{proof}[Proof of Theorem \ref{Abelian}]
By Theorem \ref{KW}, there is a sequence $\varepsilon_n \geq \varepsilon_0 >0$ and normal nilpotent subgroups $N_n \triangleleft \pi_1 (M_n)$ of rank and step $\leq d$ with the property that for each sequence $p_n \in M_n$, the images of the maps 
$$\pi_1 ( B(p_n , \varepsilon_n ), p_n  )  \to \pi_1 (M_n, p_n )      $$ 
given by the inclusions $B(p_n, \varepsilon_n) \to M_n$ contain $N_n$ as finite index subgroups.
By Theorem \ref{Sys}, there is a sequence of points $x_n \in M_n$ and $r_n \searrow 0$ such that the maps 
$$\pi_1 (  B(x_n,r_n ) , x_n   ) \to \pi_1 (M_n , x_n)    $$
given by the inclusions $B(x_n, r_n) \to M_n$ have non-trivial image.
For large enough $n$, we have $r_n \leq \varepsilon_0$, and the maps 
$$\pi_1 (  B(x_n,\varepsilon_n ) , x_n   ) \to \pi_1 (M_n , x_n)    $$
have non-trivial image. By Lemma \ref{TorFree}, such images are infinite, implying that their finite index subgroups $N_n$ are non-trivial.

Since the groups $N_n$ are finitely generated nilpotent, their centers $H_n \leq N_n$ are non-trivial finitely generated abelian, and they are preserved by any automorphism of $N_n$.  This means that the adjoint action of $\pi_1(M_n, x_n)$ on $N_n$ preserves $H_n$, proving the theorem.
\end{proof}

\begin{proof}[Proof of Theorem \ref{HyperCollapse}:]
 By contradiction, assume there is a collapsing sequence $M_n \in \mathfrak{M}(d,c,D)$ consisting of aspherical manifolds with non-elementary hyperbolic fundamental groups $\pi_1(M_n)$.

 By Theorem \ref{Abelian}, for all large $n$, there are non-trivial finitely generated (torsion free, by Lemma \ref{TorFree}) abelian normal subgroups $H_n \triangleleft \pi_1(M_n)$. By Theorem \ref{Z2}, the groups $H_n$ are cyclic, and by Theorem \ref{Normalizer}, $[\pi_1(M_n) : H_n ] < \infty$, contradicting the fact that the groups $\pi_1(M_n)$ are non-elementary. 
\end{proof}

\section{Further Problems}\label{Problems}

The most natural question to follow up Theorems \ref{Abelian} and \ref{HyperCollapse} is whether the diameter hypothesis could be removed.

\begin{qu}
Let $M_n$ be a sequence of aspherical $d$-dimensional closed smooth Riemannian manifolds with non-elementary hyperbolic fundamental groups and $Ric(M_n) \geq -1$. Is it possible that $vol(M_n)\to 0$ as $n \to \infty$?
\end{qu}

\begin{qu}
Let $M$ be a $d$-dimensional closed smooth manifold admitting a metric of constant sectional curvature $-1$, and $D > 0$. How small 
\[    \spadesuit (M)   :=  \inf \{ diam (M,g)  \mid g \text{ Riemannian, } Ric(M,g) \geq  -1    \}      \]
\[   \clubsuit (M)   :=  \inf \{ vol (M,g)  \mid g \text{ Riemannian, } Ric(M,g) \geq  -1    \}       \]
\[      \clubsuit (M,D)   :=  \inf \{ vol (M,g)  \mid g \text{ Riemannian, } Ric(M,g) \geq  -1 , diam(M,g) \leq D   \}       \]
can be? Could $\spadesuit (M)$, $ \clubsuit (M) $, $ \clubsuit (M,D) $ be estimated in terms of the complexity of $\pi_1 (M)$?
\end{qu}

\end{document}